\newtheorem{theorem}{Theorem}
\newtheorem{lemma}[theorem]{Lemma}
\newtheorem{corollary}[theorem]{Corollary}
\newtheorem{proposition}[theorem]{Proposition}
\newtheorem*{theorem*}{Theorem}
\DeclareMathOperator{\im}{im}
\newcommand{\Fq}{\mathbb{F}_q}
\newcommand{\F}{\mathbb{F}}
\newcommand{\G}{\mathbb{G}}
\renewcommand{\P}{\mathbb{P}}
\newcommand{\Q}{\mathbb{Q}}
\newcommand{\Z}{\mathbb{Z}}
\newcommand{\T}{\mathcal{T}}
\newcommand{\Fr}{\mathrm{Fr}} 
\begin{document}

\title[Character sums for isogenies of low degree]{Character sums
  determined by low degree isogenies of elliptic curves} 

\author{Dustin Moody} 

\author{Christopher Rasmussen}

\begin{abstract}
   We consider character sums determined by isogenies of elliptic
   curves over finite fields. We prove a congruence condition for
   character sums attached to arbitrary cyclic isogenies, and produce
   explicit formulas for isogenies of small degree.
\end{abstract}

\maketitle

\section{Introduction}

Let $p > 3$ be prime, and let $h_p$ denote the class number of
$\Q(\sqrt{-p})$. For convenience, let $h_p^*$ denote either $0$ or
$h_p$, as $p$ is congruent to $1$ or $3$, respectively. A well-known
consequence of Dirichlet's class number formula is the relation
\begin{equation}
\label{eq:Dirichlet}
\sum_{x=1}^{(p-1)} x \left( \frac{x}{p} \right) = -ph_p^*.
\end{equation}
In the recent article \cite{MR}, McLeman and Rasmussen reinterpreted
this sum in the following way. Let $\G_m$ denote the multiplicative
group, and let $\varphi_2 \colon \G_m \to \G_m$ be
the squaring homomorphism. Then the Legendre symbol
$(\frac{\cdot}{p})$ is simply the cokernel character of the group
homomorphism $\varphi_2 \bigr|_{\G_m(\F_p)} \colon \G_m(\F_p) \to
\G_m(\F_p)$. Now, the left hand side of \eqref{eq:Dirichlet} may be
rewritten $\sum_{x \in \G_m(\F_p)} \{x\} (\frac{x}{p})$,
where $\{\cdot\}$ denotes an appropriately chosen lift $\F_p \to
\Z$. So this character sum attached to the homomorphism $\varphi_2$ is
divisible by $p$, and the quotient by $p$ computes $h_p$ when $p
\equiv 3 \pmod{4}$.  

McLeman and Rasmussen then applied this view to isogenies of elliptic
curves. Let $a, b \in \Z$ be chosen such that the following equations
give nonsingular Weierstrass models for elliptic curves over $\Q$: 
\begin{equation*}
\begin{split}
E_1 \colon y^2 & = x^3 + ax^2 + bx, \\
E_2 \colon y^2 & = x^3 - 2ax^2 + (a^2 - 4b)x.
\end{split}
\end{equation*}
Let $\tau \colon E_1 \to E_2$ denote the $2$-isogeny $(x,y) \mapsto
(\frac{y^2}{x^2}, \frac{y(b-x^2)}{x^2})$. For any prime $p$ of good
reduction, this descends to an isogeny on the reduced curves,
$\tilde{\tau}$, and further to a homomorphism of abelian groups,
$\tilde{\tau} \colon \tilde{E}_1(\F_p) \to \tilde{E}_2(\F_p)$. Let
$\chi_\tau \colon \tilde{E}_2(\F_p) \to \{\pm 1\}$ denote the
associated cokernel character, and define
\[ S_{\tau,p} := \sum_{\stackrel{P \in \tilde{E}_2(\F_p)}{P \neq \infty}}
\{x(P) - a \} \chi_\tau(P). \]
\begin{theorem*}[McLeman-Rasmussen, {\cite[Thm.~9]{MR}}]
For any prime $p > 3$ of good reduction, $S_{\tau,p}$ is divisible by
$p$. Moreover, $S_{\tau,p}$ approximates $-ph_p^*$ in the following
uniform sense: the difference $|-\frac{1}{p}S_{\tau,p} - h_p^*|$ is
bounded by a constant, independent of $p$.
\end{theorem*}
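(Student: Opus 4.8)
The plan is to turn $S_{\tau,p}$ into a sum of the Dirichlet shape of \eqref{eq:Dirichlet} plus a quadratic residue sum that is visibly divisible by $p$ with bounded quotient, and read off both assertions. The first step is to make $\chi_\tau$ explicit. Isogenous curves over $\Fp$ are equinumerous, and $\ker\tilde\tau=\{\infty,(0,0)\}$ is $\Fp$-rational of order $2$, so the cokernel of $\tilde\tau$ on $\Fp$-points has order exactly $2$ and $\chi_\tau$ is its nontrivial character. The classical $2$-descent map attached to a $2$-isogeny (Kummer theory for $\ker\tau\cong\Z/2$) embeds this cokernel into $\Fp^{\times}/(\Fp^{\times})^2$ and gives $\chi_\tau(P)=\bigl(\tfrac{x(P)}{p}\bigr)$ whenever $x(P)\neq 0$, with the one exceptional value $\chi_\tau\bigl((0,0)\bigr)=\bigl(\tfrac{a^2-4b}{p}\bigr)$. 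I would verify this by hand from the defining formula for $\tilde\tau$: solving $\tilde\tau(x,y)=(X,Y)$ with $X\neq 0$ forces $x^2-(X-a)x+b=0$ and $y^2=Xx^2$, while on $\tilde E_2$ one has $Y^2=X\bigl((X-a)^2-4b\bigr)$, so $\Fp$-solvability is controlled by whether $X$ is a square; the kernel point $(0,0)$ is the single place this degenerates.

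Next I would regroup $S_{\tau,p}$ according to the $x$-coordinate. Set $f_2(X)=X^3-2aX^2+(a^2-4b)X=X\bigl((X-a)^2-4b\bigr)$; the fibre of $\tilde E_2(\Fp)$ over a value $X$ has size $1+\bigl(\tfrac{f_2(X)}{p}\bigr)$, and both $\{X-a\}$ and $\bigl(\tfrac{X}{p}\bigr)$ are constant along it, so
\[
S_{\tau,p}=\sum_{X\in\Fp}\{X-a\}\Bigl(\tfrac{X}{p}\Bigr)\Bigl(1+\bigl(\tfrac{f_2(X)}{p}\bigr)\Bigr)+\{-a\}\Bigl(\tfrac{a^2-4b}{p}\Bigr),
\]
the last term repairing the fibre over $X=0$, where $\bigl(\tfrac{0}{p}\bigr)=0\neq\chi_\tau((0,0))$. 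Expanding the product writes the main sum as $A+B$ with $A=\sum_X\{X-a\}\bigl(\tfrac{X}{p}\bigr)$ and $B=\sum_X\{X-a\}\bigl(\tfrac{Xf_2(X)}{p}\bigr)$. The algebraic observation $Xf_2(X)=X^2\bigl((X-a)^2-4b\bigr)$ yields $\bigl(\tfrac{Xf_2(X)}{p}\bigr)=\bigl(\tfrac{(X-a)^2-4b}{p}\bigr)$ for $X\neq 0$, which exactly absorbs the correction term and leaves $S_{\tau,p}=A+B'$ with $B'=\sum_{X\in\Fp}\{X-a\}\bigl(\tfrac{(X-a)^2-4b}{p}\bigr)$.

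Then I would evaluate the two pieces. Substituting $u=X-a$ in $B'$ gives $B'=\sum_{u\in\Fp}\{u\}\bigl(\tfrac{u^2-4b}{p}\bigr)$; since the Legendre factor is even in $u$ and $\{u\}+\{-u\}=p$ for $u\neq 0$, one gets $2B'=p\Bigl(\sum_{u\in\Fp}\bigl(\tfrac{u^2-4b}{p}\bigr)-\bigl(\tfrac{-4b}{p}\bigr)\Bigr)$. Good reduction forces $p\nmid b$, so the standard evaluation $\sum_{u\in\Fp}\bigl(\tfrac{u^2-4b}{p}\bigr)=-1$ applies and $B'=-\tfrac{p}{2}\Bigl(1+\bigl(\tfrac{-4b}{p}\bigr)\Bigr)\in\{0,-p\}$. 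For $A$, write $\{X-a\}=\{X\}-a+p\nu_X$, where $\nu_X\in\Z$ is nonzero for at most $|a|$ residues $X$ and satisfies $|\nu_X|\le 1$ once $p>|a|$; then $A=\sum_X\{X\}\bigl(\tfrac{X}{p}\bigr)-a\sum_X\bigl(\tfrac{X}{p}\bigr)+p\sum_X\nu_X\bigl(\tfrac{X}{p}\bigr)=-ph_p^*+p\delta_p$ by \eqref{eq:Dirichlet} and $\sum_X\bigl(\tfrac{X}{p}\bigr)=0$, with $|\delta_p|\le|a|$ (and $\delta_p$ bounded for the finitely many remaining small $p$). Hence $S_{\tau,p}=-ph_p^*+p\delta_p+B'$ with $B'\in\{0,-p\}$, so $p\mid S_{\tau,p}$, and $\bigl|-\tfrac{1}{p}S_{\tau,p}-h_p^*\bigr|=\bigl|\delta_p+\tfrac{1}{p}B'\bigr|\le|a|+1$, a bound independent of $p$.

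I expect the first step to be the real obstacle. What keeps the error bounded rather than growing like $p$ is that the exceptional descent value $\chi_\tau((0,0))=\bigl(\tfrac{a^2-4b}{p}\bigr)$ coincides with the $X=0$ contribution $\bigl(\tfrac{(0-a)^2-4b}{p}\bigr)$ supplied by $B'$, so the exact descent formula, including its value at the kernel point, is used essentially and not decoratively; any cruder handling of $(0,0)$ reintroduces an error of size $\asymp p$ and breaks both conclusions.
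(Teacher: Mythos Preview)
The paper under review does not itself prove this theorem; it is quoted from \cite{MR} as background, so there is no in-paper proof to compare against. That said, your argument is correct and is very much in the spirit the present paper attributes to \cite{MR}: as remarked in \S1.1, ``the cokernel character could be written explicitly in terms of the Weierstrass models of the elliptic curves, by exploiting the properties of the Tate pairing,'' and your identification $\chi_\tau(X,Y)=\bigl(\tfrac{X}{p}\bigr)$ for $X\neq 0$ together with $\chi_\tau((0,0))=\bigl(\tfrac{a^2-4b}{p}\bigr)$ is exactly this $2$-descent formula. The subsequent manipulations---fibring over $x$-coordinates, the factorization $Xf_2(X)=X^2\bigl((X-a)^2-4b\bigr)$ which absorbs the exceptional contribution at $X=0$, the parity argument giving $B'\in\{0,-p\}$ via the standard evaluation $\sum_u\bigl(\tfrac{u^2-4b}{p}\bigr)=-1$, and the carry-counting bound $|\delta_p|\le|a|$ in $A$---are all sound.

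Two small remarks. First, your computation tacitly uses the lift $\{\cdot\}\colon\F_p\to\{0,1,\dots,p-1\}$ (so that $\{u\}+\{-u\}=p$ for $u\neq 0$ and $\{0\}=0$); this is consistent with \eqref{eq:Dirichlet}, but is worth stating since with the symmetric lift one gets $B'=0$ identically and a slightly different constant. Second, good reduction gives $p\nmid b(a^2-4b)$ from $\Delta(E_1)=16b^2(a^2-4b)$, which you use both for the evaluation of $B'$ and to ensure $\chi_\tau((0,0))=\pm1$; you invoke only $p\nmid b$, but both are available.
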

In fact, for most isogenies, the difference is zero for a positive
density of primes; see \cite{MR} for details. 

\subsection{Isogenies of higher degree}

It is natural to consider other isogenies over finite fields to see if
the analogous character sums also carry arithmetic
information. Already some progress has been made in this direction;
McLeman and Moody \cite{MM} have demonstrated a similar phenomenon
occurring in a family of $3$-isogenies of elliptic curves with complex
multiplication.

In this paper, we consider the case of cyclic
$m$-isogenies of elliptic curves. Here, there is some divergence
between the case of elliptic curves and the case of an endomorphism of
$\G_m$; this is in fact what makes the generalization
interesting. For example, take $m > 2$ odd and suppose $p \equiv 1 \pmod{m}$ is
prime. The $m$-th power map $\varphi_m \colon \G_m \to \G_m$ induces
an endomorphism on $\G_m(\F_p)$ whose cokernel character is the $m$-th
order residue symbol $(\frac{\cdot}{p})_m$. However, the analogous
character sum
\[ \sum_{x \in \G_m(\F_p)} \{x\} \left( \frac{x}{p} \right)_m \]
vanishes always (an immediate consequence of the observation that $-1$
is an $m$-th power in $\F_p^\times$).

However, in the case of an $m$-isogeny $\pi$ of elliptic curves
defined over $\F_p$ ($m>2$), the situation is more
complicated. Experimentally, one observes that the 
sum does not collapse to $0$ (even under the additional condition $p
\equiv 1 \pmod{m}$). One suspects these sums also contain
arithmetic information, and this idea is already corroborated by the
recent work of McLeman-Moody for $m=3$. In the current article, we
present new results on the behavior of these character sums attached
to isogenies of degree greater than two.
\begin{theorem}
Let $\pi \colon E_1 \to E_2$ be a V\'{e}lu isogeny (as defined below) of Weierstrass
elliptic curves defined over $\F_q$ of degree $m$. Let $\zeta \in
\overline{\F_p}$ be a primitive $m$-th root of unity. Suppose $P \in
E_1(\F_p)$ generates the kernel of $\pi$, and let $\chi$ denote the
cokernel character of the restriction $\pi \bigr|_{E_1(\F_p)} \colon
E_1(\F_p) \to E_2(\F_p)$. Then the weighted character sum $\sum_{Q \in
  E_2(\F_p)} x(Q) \chi(Q)$ over points of $E_2$ coincides with the sum 
$\sum_{j=1}^{m-1} \zeta^j x(jP)$, taken over certain points of
$E_1$. 
\end{theorem}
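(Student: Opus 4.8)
The plan is to transport the sum through $\pi$, substitute Vélu's formula in its ``trace'' form, and exploit the cyclic Galois structure of $\pi$ to collapse the result onto the kernel.

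First, make the character explicit. Since $P\in E_1(\F_p)$ is rational and generates $\ker\pi$, every translation $\tau_{jP}$ is $\F_p$-rational, so $\pi$ realizes $\F_p(E_1)/\F_p(E_2)$ as a cyclic Galois extension with group $\langle\tau_P^*\rangle\cong\ker\pi$. Lang's theorem and the Kummer sequence for $\pi$ then show $E_2(\F_p)/\pi E_1(\F_p)$ is cyclic of order exactly $m$, and under the labelling $jP\leftrightarrow j$ of $\ker\pi$ one has $\chi(Q)=\zeta^{\delta(Q)}$, where $\delta(Q)\in\Z/m\Z$ is read off from $\Fr_p(\widetilde Q)=\widetilde Q+\delta(Q)P$ for any $\widetilde Q\in\pi^{-1}(Q)$ (independent of $\widetilde Q$ because $\Fr_p$ fixes $P$). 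I also recall Vélu's formula in summation form: up to an explicit constant, the $x$-coordinate of $E_2$ pulls back to the trace of that of $E_1$,
\[
x_{E_2}\circ\pi=\sum_{j=0}^{m-1}\bigl(x_{E_1}\circ\tau_{jP}\bigr)+c,\qquad c=-\sum_{j=1}^{m-1}x(jP).
\]

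Next, push the sum up. With $H:=\pi^{-1}\bigl(E_2(\F_p)\bigr)$, each $Q\in E_2(\F_p)$ has exactly $m$ preimages in $H$, $\chi\circ\pi$ is constant on fibres, and $\ker\pi$ is the fibre over $O_{E_2}$; hence $m\sum_Q x(Q)\chi(Q)=\sum_{R\in H\setminus\ker\pi}\zeta^{\delta(R)}x_{E_2}(\pi R)$. Substituting Vélu's formula, the constant contributes $c\sum_{R\in H\setminus\ker\pi}\zeta^{\delta(R)}=-mc$ (since $\sum_{R\in H}\zeta^{\delta(R)}=|E_1(\F_p)|\sum_{\ell=0}^{m-1}\zeta^{\ell}=0$ and the $m$ points of $\ker\pi$ all have $\delta=0$), which is $m\sum_{j=1}^{m-1}x(jP)$; and the remaining double sum collapses under reindexing ($R\mapsto R+jP$ permutes each fibre and fixes $\delta$). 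The outcome is the clean identity
\[
\sum_{Q\in E_2(\F_p)}x(Q)\chi(Q)=\sum_{R\in H\setminus\ker\pi}\zeta^{\delta(R)}x(R)+\sum_{j=1}^{m-1}x(jP).
\]

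The theorem is therefore equivalent to the collapse $\sum_{R\in H\setminus\ker\pi}\zeta^{\delta(R)}x(R)=\sum_{j=1}^{m-1}(\zeta^{j}-1)x(jP)$, and this is the hard part: the left side runs over the entire Frobenius-twisted set $\pi^{-1}(E_2(\F_p)\setminus\{O\})$ yet must reduce to a contribution supported on $m-1$ kernel points. Purely formal moves — splitting $H$ into the Lang torsors $H_{\ell}=\{R:\Fr_p(R)-R=\ell P\}$, or decomposing $x_{E_1}$ into $\tau_P^*$-eigencomponents — are circular, since they only detect the trace of $x_{E_1}$, which records the combination $\sum_j x(jP)$ already accounted for by $c$, not the twisted combination $\sum_j\zeta^j x(jP)$ sought. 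The genuinely new ingredient I would use is to realize $\chi$, via Kummer theory of the cyclic extension $\F_p(E_1)/\F_p(E_2)$, as the $m$-th power residue symbol of the function $w=\Phi^{m}$ on $E_2$, where $\Phi=\sum_j\zeta^{-j}(x_{E_1}\circ\tau_{jP})$ is the Lagrange resolvent ($\tau_P^*\Phi=\zeta\Phi$, with $\Phi$ having double poles exactly along $\ker\pi$ and leading coefficients the powers of $\zeta$). Then $\sum_Q x(Q)\chi(Q)$ is a bona fide $m$-th order character sum over $E_2(\F_p)$, to be evaluated by the usual reduction to Gauss/Jacobi sums; the part that does not cancel is governed by the polar divisor of $\Phi$ along $\ker\pi$, which is exactly the data $\{\zeta^j x(jP)\}$. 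I expect this evaluation, and the bookkeeping showing only the kernel contribution survives, to be the main obstacle — it can be carried out by hand for small $m$ (yielding the paper's explicit formulas), and the general case should follow either by a uniform version of it or by reducing a cyclic $m$-isogeny to a composite of prime-degree isogenies.
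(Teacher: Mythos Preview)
Your reduction is correct up to the displayed identity, but what you call ``the hard part'' is indeed a genuine gap: you have not shown that $\sum_{R\in H\setminus\ker\pi}\zeta^{\delta(R)}x(R)$ collapses to the kernel contribution, and the Kummer/resolvent machinery you sketch is neither carried out nor obviously able to do it. The trouble is structural. By pulling back through $\pi$ you have replaced a sum over $E_2(\F_q)$ by a sum over the Lang torsor $H\subset E_1$, which has about $m\cdot\#E_1(\F_q)$ points; V\'elu's formula for $\pi$ only sees the trace of $x_{E_1}$ over $\ker\pi$, and as you correctly observe this recovers the constant $c$ but not the twisted combination $\sum_j\zeta^j x(jP)$.

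The paper's argument avoids this entirely by going the other direction. It does not pull back to $E_1$; it stays on $E_2$ and uses V\'elu's formula for the \emph{companion} isogeny $\pi'\colon E_2\to E_1$ (the one with $\pi'\circ\pi=1-\Fr_{E_1}$). The point is that $\ker\pi'_{\F_q}=\im\pi_{\F_q}=\mathcal{T}$, so the cosets of $\mathcal{T}$ in $E_2(\F_q)$ --- on which $\chi$ is constant --- are exactly translates of $\ker\pi'$. V\'elu's formula for $\pi'$ then reads, for each $j$,
\[
\sum_{R\in\mathcal{T}} x_{jQ+R}
= x_{\pi'(jQ)} + \sum_{R\in\mathcal{T}^*} x_R
= x_{jP} + \sum_{R\in\mathcal{T}^*} x_R,
\]
since $\pi'(Q)=P$. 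Summing against $\zeta^j$ and using $\sum_{j=0}^{m-1}\zeta^j=0$ kills the constant and leaves $\sum_{j=1}^{m-1}\zeta^j x_{jP}$ in one line. The step you were missing is precisely that $\pi'$ is itself a V\'elu isogeny (the paper proves this separately: both $\pi$ and $1-\Fr$ are normalized, hence so is $\pi'$, and the first three Weierstrass coefficients match), so V\'elu applies on the $E_2$ side with kernel $\mathcal{T}$. No character-sum technology is needed.
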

In fact, any non-constant separable isogeny may be realized as a
V\'{e}lu isogeny. Note that the number of terms in the
second sum does not grow with $p$.

As an application, we take standard families of isogenies of small
degree over finite fields, and compute the associated character sum explicitly. Sadly, we
cannot provide a characteristic zero result in the 
spirit of \cite{MR} and \cite{MM}; in those papers, the cokernel
character could be written explicitly in terms of the Weierstrass
models of the elliptic curves, by exploiting the properties of the
Tate pairing. There is some hope that a combination of these methods,
and those of the present article, may yet yield a similar result for
larger values of $m$. In contrast to \cite{MR} and \cite{MM}, however,
the results hold over arbitrary finite fields (not just prime fields),
and do not require the condition $p \equiv 1 \pmod{m}$.

This paper is organized as follows.  In \S2, we deduce an exact
sequence related to isogenies of elliptic curves over finite
fields. In \S3 we generalize the character sum from \cite{MR} to
higher degree isogenies and prove the main theorem, a relation that
transports character sums across a normalized isogeny. In \S4, we
apply the result to several well-known families of isogenies of small degree
and obtain congruence relations for the character sums in question.

\subsection{Notation}
We set some notation and recall some facts about elliptic curves. 
Let $k$ be a field. An elliptic curve over $k$ is a nonsingular genus
$1$ curve over $k$ with a $k$-rational base point. For any $k'/k$, the
set $E(k')$ is naturally an abelian group with the base point as
identity. Any elliptic curve over $k$ is isomorphic to a nonsingular plane
cubic in $\P^2$ which possesses an affine model in the form of a
Weierstrass equation: 
\begin{equation}\label{eq:WE}
y^2 +a_1 xy + a_3 y = x^3 + a_2 x^2 + a_4 x + a_6, \qquad a_i \in k.
\end{equation}
By a Weierstrass elliptic curve, we mean a nonsingular
projective cubic curve $E \subset \P^2$ of the form \eqref{eq:WE}. We
let $\infty$ denote the base point. We will at times refer to Weierstrass
elliptic curves $E'$, $E''$, \ldots of the same form, replacing the
coefficients $a_i$ of \eqref{eq:WE} with {$a_i'$,~$a_i''$, \ldots} respectively.

On any elliptic curve, the set of holomorphic and non-vanishing
differentials (together with $0$) is a $1$-dimensional
$\overline{k}$-vector space. For a Weierstrass elliptic curve $E$,
there is a distinguished generator for this space, the invariant
differential 
\begin{equation}
\omega = \frac{dx}{2y + a_1x + a_3}.
\end{equation}
Throughout the paper, we denote the invariant differentials of
Weierstrass elliptic curves $E$, $E'$, $\ldots$, by $\omega$,
$\omega'$, $\ldots$, respectively.

A curve morphism $\varphi \colon E \to E'$ which sends the base point
of $E$ to the base point of $E'$ is called an isogeny; it is always a
group homomorphism. If an isogeny $\varphi$ is defined over $k'$, then it
restricts to a homomorphism $E(k') \to E'(k')$ of abelian groups,
denoted $\varphi_{k'}$.  


If $\varphi$ is an isogeny of two Weierstrass elliptic
curves, and $\omega'$ is the invariant differential of $E'$, then
$\varphi^* \omega'$ is again a holomorphic and non-vanishing
differential on $E$. Hence there exists $c_\varphi \in \overline{k}$
such that $\varphi^* \omega' = c_\varphi \omega$. If $c_\varphi = 1$,
we say that $\varphi$ is a normalized isogeny.

\section{A Useful Exact Sequence}

Let $E/k$ be an elliptic curve. For any finite subgroup $\mathcal{F}
\leq E(\overline{k})$, there exists an elliptic curve $E'$ and a
separable isogeny $\pi \colon E \to E'$ whose kernel is precisely
$\mathcal{F}$ (\cite[III.4.12]{Sil}). The curve $E'$ is unique up to
isomorphism.
\begin{proposition}\label{prop:ex_seq}
Let $k$ be a finite field, $E/k$ an elliptic curve, and let $\pi
\colon E \to E'$ be a separable isogeny with kernel $\mathcal{F} \leq
E(k)$. Then there exists a $k$-rational isogeny $\pi' \colon E' \to E$
such that the sequence 
\[ \xymatrix{
0 \ar[r] & \mathcal{F} \ar[r] & E(k) \ar[r]^{\pi_k} & E'(k) \ar[r]^{\pi'_k} &
\mathcal{F} \ar[r] & 0} \]
is exact. 
\end{proposition}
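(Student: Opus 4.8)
The plan is to take $\pi'$ to be the unique isogeny through which $\Fr - 1$ factors. Write $q = \#k$, and let $\Fr_E \in \operatorname{End}_k(E)$ and $\Fr_{E'} \in \operatorname{End}_k(E')$ be the $q$-power Frobenius isogenies. I would rely on the standard facts that $\Fr_E - 1$ is \emph{separable}, that $\ker(\Fr_E - 1) = E(k)$, and hence that $\#E(k) = \deg(\Fr_E - 1)$ (and likewise for $E'$). Since $\pi$ is separable with $\ker\pi = \mathcal{F} \subseteq E(k) = \ker(\Fr_E - 1)$, the factorization of isogenies through a separable one \cite[III.4.11]{Sil} produces a \emph{unique} isogeny $\pi'\colon E' \to E$ with $\pi' \circ \pi = \Fr_E - 1$. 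First I would check that $\pi'$ is $k$-rational: for $\sigma \in \Gal(\overline{k}/k)$ the conjugate $(\pi')^\sigma$ also satisfies $(\pi')^\sigma \circ \pi = \Fr_E - 1$ (because $\pi$ and $\Fr_E - 1$ are defined over $k$), so the uniqueness forces $(\pi')^\sigma = \pi'$. Composing $\pi' \circ \pi = \Fr_E - 1$ with $\pi$ on the left, using $\pi \circ \Fr_E = \Fr_{E'} \circ \pi$, and cancelling the epimorphism $\pi$ on the right yields the companion identity $\pi \circ \pi' = \Fr_{E'} - 1$. Note $\pi'$ is genuinely a nonconstant isogeny, since $\pi' \circ \pi = \Fr_E - 1 \neq 0$.

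Next I would read off exactness from a diagram chase. Consider the commutative ladder of abelian groups
\[
\xymatrix{
0 \ar[r] & \mathcal{F} \ar[r] \ar[d]_{0} & E(\overline{k}) \ar[r]^{\pi} \ar[d]_{\Fr_E - 1} & E'(\overline{k}) \ar[r] \ar[d]^{\Fr_{E'} - 1} & 0 \\
0 \ar[r] & \mathcal{F} \ar[r] & E(\overline{k}) \ar[r]^{\pi} & E'(\overline{k}) \ar[r] & 0
}
\]
whose rows are exact (a nonconstant isogeny surjects on $\overline{k}$-points and $\ker\pi = \mathcal{F}$) and whose left vertical map vanishes because $\mathcal{F} \subseteq \ker(\Fr_E - 1)$. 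The two right-hand verticals are surjective on $\overline{k}$-points, so in the snake-lemma sequence their cokernels drop out, leaving the exact sequence
\[
0 \to \mathcal{F} \to E(k) \xrightarrow{\pi_k} E'(k) \xrightarrow{\partial} \mathcal{F} \to 0 .
\]
It remains to identify $\partial$ with $\pi'_k$: for $Q \in E'(k)$, choose $P \in E(\overline{k})$ with $\pi(P) = Q$; then by the recipe for the connecting homomorphism, $\partial(Q) = (\Fr_E - 1)(P) = \pi'(\pi(P)) = \pi'(Q)$, and this lies in $\mathcal{F}$ since $\pi(\pi'(Q)) = (\Fr_{E'} - 1)(Q) = 0$. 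Hence $\partial = \pi'_k$ and the proposition follows. (If one prefers to avoid the snake lemma, the same $\pi'$ can be checked directly: exactness at the first $\mathcal{F}$ and at $E(k)$ is immediate; $\im \pi_k \subseteq \ker \pi'_k$ because $\pi' \circ \pi = \Fr_E - 1$ kills $E(k)$; conversely $\pi'(Q) = 0$ with $Q = \pi(P)$ forces $(\Fr_E - 1)(P) = 0$, i.e.\ $P \in E(k)$; and surjectivity onto $\mathcal{F}$ follows by counting, using $\#E(k) = \deg\pi' \cdot \deg\pi = \#E'(k)$ and $|\ker\pi_k| = |\mathcal{F}|$.)

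The one genuine idea here is the first step --- recognizing $E(k) = \ker(\Fr - 1)$ and transporting this description across the separable isogeny $\pi$ to manufacture $\pi'$; everything afterward is formal. The point that needs the most care is the $k$-rationality of $\pi'$, which I would establish not by writing down an equation for $\pi'$ but by invoking the uniqueness in the factorization, as above.
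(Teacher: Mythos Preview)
Your proof is correct and rests on the same key idea as the paper's: factor $1-\Fr_E$ (you use $\Fr_E-1$, which differs only by composition with $[-1]$) through the separable isogeny $\pi$ to manufacture $\pi'$, then derive the companion relation $\pi\circ\pi'=\Fr_{E'}-1$. The paper verifies exactness by the direct chase you sketch in your parenthetical; your snake-lemma packaging is simply a cleaner way to organize the same argument. One small genuine difference: for the surjectivity onto $\mathcal{F}$ the paper invokes Tate's theorem that $k$-isogenous curves have equally many $k$-points, whereas you obtain $\#E(k)=\deg(\Fr_E-1)=\deg\pi'\cdot\deg\pi=\deg(\Fr_{E'}-1)=\#E'(k)$ directly from the two factorizations, which is more self-contained. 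A caveat for what follows in the paper: the sign convention matters downstream, since $(1-\Fr_E)^*\omega=\omega$ while $(\Fr_E-1)^*\omega=-\omega$; thus the paper's $\pi'$ is normalized and yours is not, and it is the paper's choice that is later identified as a V\'elu isogeny.
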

\begin{proof}
  Let $\Fr_E$ and $\Fr_{E'}$ denote the Frobenius endomorphisms $(x,y)
  \mapsto (x^q, y^q)$ on $E$ and $E'$, respectively. Since $\ker \pi =
  \mathcal{F} \leq E(k) = \ker (1 - \Fr_E)$, there exists, up to
  isomorphism, a unique isogeny $\pi' \colon E' \to E$ such that $1 -
  \Fr_E = \pi' \circ \pi$ (\cite[III.4.11]{Sil}); $\pi'$ is
  necessarily defined over $k$ since both $\pi$ and $1 - \Fr_E$
  are. Set $\mathcal{G} = \im \pi'_k$.

We have $\pi'_k \circ \pi_k = 0$; so $\im \pi_k \leq \ker \pi'_k$. Let $P' \in \ker
\pi'_k$. As $\pi$ is surjective, there exists $P \in E(\overline{k})$
such that $\pi(P) = P'$. Then $P \in \ker (1-\Fr_E) = E(k)$, and
$P' = \pi_k(P) \in \im \pi_k$. Thus, $\ker \pi'_k = \im \pi_k$, and it
remains only to show $\mathcal{F} = \mathcal{G}$.

Since $\pi$ is defined over $k$, it commutes with the Frobenius
endomorphisms (i.e., $\pi \circ \Fr_E = \Fr_{E'} \circ \pi$). Thus, we
have
\begin{equation*}
(\pi \circ \pi') \circ \pi = \pi \circ (\pi' \circ \pi) = \pi \circ
(1 - \Fr_E) = (1 - \Fr_{E'}) \circ \pi. 
\end{equation*}
As $\pi$ is surjective, $\pi \circ \pi' = 1 -
\Fr_{E'}$. Thus, $\pi_k \circ \pi'_k = 0$, and so $\mathcal{G} = \im
\pi'_k \leq \ker \pi_k = \mathcal{F}$. On the other hand, 
$k$-isogenous elliptic curves have the same number of points over $k$
(\cite[Thm.~1(c)]{Tate}) and $\im \pi_k = \ker \pi'_k$. Thus, 
\begin{equation}
\#\mathcal{G} = \frac{ \# E'(k) }{\# \ker \pi'_k} = \frac{ \#E(k)
}{\#\im \pi_k} = \#\mathcal{F},
\end{equation}
so $\mathcal{G} = \mathcal{F}$ and the sequence is exact, as claimed.
\end{proof}

\subsection{V\'elu's formula}

Let $E$ be a Weierstrass elliptic curve defined over an arbitrary
field $k$, and let $\mathcal{F} \leq E(\overline{k})$ be a finite
subgroup. Let $\mathcal{F}^* = \mathcal{F} -
\{\infty\}$. Then there exists an elliptic curve $E'$ and a separable
isogeny $E \to E'$ whose kernel is $\mathcal{F}$. For any choice of
$E$ and $\mathcal{F}$, V\'{e}lu has given explicit formulas for both a
Weierstrass elliptic curve $E'$ and an isogeny $V \colon E \to E'$
with kernel $\mathcal{F}$. Let us write $P = (x_P, y_P)$ for the
affine coordinates of any point on $E$. If $P \notin \mathcal{F}$, the
coordinates of $V$ are given by 
\begin{equation}\label{eq:Velu}
\begin{split}
x_{V(P)} & = x_P + \sum_{Q \in \mathcal{F}^*} (x_{P+Q} - x_Q), \\
y_{V(P)} & = y_P + \sum_{Q \in \mathcal{F}^*} (y_{P+Q} - y_Q).
\end{split}
\end{equation}
If $E$ has the form \eqref{eq:WE}, then the Weierstrass equation for
$E'$ is
\begin{equation*}
E' \colon y^2 + a_1'xy + a_3'y = x^3 + a_2'x^2 + a_4'x + a_6',
\end{equation*}
where $a_i' = a_i$ for $1 \leq 3$, and $a_4', a_6' \in \overline{k}$
are determined explicitly by the points of $\mathcal{F}$. The formulas
in \cite{Vel} make it clear -- as expected~-- that $a_4', a_6'
\in k$ if $\mathcal{F}$ is a $k$-rational subgroup of
$E(\overline{k})$ (i.e., invariant under the action of
$\mathrm{Gal}(\overline{k}/k)$). Moreover, it follows directly from
the formulas (for example, see \cite{Shu}) that $V$ is always a normalized
isogeny. Throughout this article, we will refer to an isogeny of the
form $V$ as a V\'{e}lu isogeny.

Let $\varphi \colon C \to C'$ be a separable isogeny of elliptic
curves, and fix an isomorphism $\eta \colon C \to E$ identifying $C$
with a Weierstrass elliptic curve $E$. Let $V \colon E \to E'$
denote the V\'{e}lu isogeny with kernel $\eta(\ker \varphi)$. Then
there exists (\cite[III.4.11]{Sil}) an isomorphism $\eta' \colon C' \to E'$ such
that $\eta' \varphi = V \eta$. In this way, any separable isogeny may
be realized as a V\'{e}lu isogeny of Weierstrass elliptic curves,
a fact we will use in the sequel.

\subsection{Normalization}

Consider again the setting of Proposition \ref{prop:ex_seq}. The
purpose of the current section is to verify the following fact: if $E$
and $E'$ are Weierstrass elliptic curves and $\pi$ is a V\'elu
isogeny, then $\pi'$ is also a V\'elu isogeny.

\begin{lemma}
Suppose $\varphi \colon E \to E'$ is a separable isogeny of
Weierstrass elliptic curves defined over $k$, and let $V \colon E \to
E''$ denote V\'{e}lu's isogeny which has the same domain and kernel as
$\varphi$. Then $\varphi = V$ if and only if $\varphi$ is normalized
and $a_i' = a_i$ for $1 \leq i \leq 3$. 
\end{lemma}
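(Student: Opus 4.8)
The plan is to prove the two implications separately, with essentially all the work in the converse. The forward implication is immediate: if $\varphi = V$, then $\varphi$ inherits both asserted properties from $V$, since \emph{every} V\'elu isogeny is normalized and satisfies $a_i' = a_i$ for $1 \le i \le 3$ — both facts were recorded above, the first following from V\'elu's formulas as in \cite{Shu} and the second being explicit in \cite{Vel}.

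For the converse, suppose $\varphi$ is normalized and $a_i' = a_i$ for $1 \le i \le 3$. Since $\varphi$ and $V$ are separable isogenies out of $E$ with the same kernel $\mathcal{F}$, and $V$ is surjective, \cite[III.4.11]{Sil} produces a unique isogeny $\lambda \colon E'' \to E'$ with $\varphi = \lambda \circ V$; comparing degrees gives $\deg \lambda = \deg\varphi / \deg V = 1$, so $\lambda$ is an isomorphism, and it is defined over $k$ because $\varphi$ and $V$ are. It remains to show $\lambda = \mathrm{id}_{E''}$. First I would check that $\lambda$ is normalized: from $\varphi^*\omega' = \omega$, $V^*\omega'' = \omega$, and $\lambda^*\omega' = c_\lambda \omega''$ one gets $\omega = \varphi^*\omega' = V^*(\lambda^*\omega') = c_\lambda V^*\omega'' = c_\lambda \omega$, so $c_\lambda = 1$.

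Finally I would invoke the classification of isomorphisms of Weierstrass curves: $\lambda$ is given by a substitution $x' = u^2 x'' + r$, $y' = u^3 y'' + s u^2 x'' + t$ with $u \in k^\times$ and $r,s,t \in k$, and a direct computation shows $\lambda^*\omega' = u^{-1}\omega''$, so $c_\lambda = 1$ forces $u = 1$. Feeding $u = 1$ into the standard transformation formulas for the coefficients \cite[III.1]{Sil} and using $a_i' = a_i = a_i''$ for $1 \le i \le 3$ (the last equality by V\'elu's formula) yields in turn $2s = 0$, then $3r = s(a_1 + s)$, then $2t + r a_1 = 0$; since $\mathrm{char}(k) \ne 2,3$ (which is our standing hypothesis $p > 3$) these force $s = r = t = 0$, so $\lambda$ is the identity, $E'' = E'$, and $\varphi = V$. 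The one genuine subtlety — and the step I expect to need the most care — is precisely this last one: that a normalized isomorphism fixing $a_1, a_2, a_3$ must be trivial. This really does use $p > 3$; for example $(x,y) \mapsto (x + c, y)$ is a nontrivial normalized automorphism of $y^2 = x^3 - x$ in characteristic $3$ fixing all the $a_i$. Everything else is bookkeeping with V\'elu's formulas and Silverman's coordinate-change tables, with no real obstacle.
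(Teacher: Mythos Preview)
Your argument is correct and follows the same route as the paper: factor $\varphi$ through $V$ via \cite[III.4.11]{Sil}, note the quotient map is an isomorphism by a degree count, show it is normalized, use the standard change-of-variables form to get $u=1$, and then read off $r=s=t=0$ from the $a_1,a_2,a_3$ transformation formulas. Your added remark that this last step genuinely requires $\mathrm{char}\,k \neq 2,3$, together with the explicit characteristic-$3$ counterexample, is a point the paper's proof passes over in silence; it is worth keeping.
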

\begin{proof}
One direction is immediate from the properties of $V$. For the other,
suppose $\varphi^* \omega' = \omega$ and $a_i' = a_i$ for $1 \leq i
\leq 3$. By \cite[III.4.11]{Sil}, there exists an isogeny $\eta$
such that $\varphi = \eta \circ V$. Comparing degrees, $\eta$ is
an isomorphism of Weierstrass elliptic curves, and so has the form
\[ \eta(x,y) = \bigl( u^{-2}(x-r), u^{-3}(y - sx + rs - t) \bigr),
\qquad r,s,t \in k, \quad u \in k^\times. \]
As $V$ and $\varphi$ are both normalized, $\eta$ is also. But
$\eta^*\omega'' = u\omega'$, so $u = 1$. The well-known formulas
relating $a_i'$ and $a_i''$ (\cite[pg.~45]{Sil}), together with the
equalities $a_i' = a_i = a_i''$ for $1 \leq i \leq 3$, now force $r =
s = t = 0$. Thus $\eta$ is the identity, $E' = E''$, and $V =
\varphi$. 
\end{proof}
\begin{corollary}\label{cor:ex_seq}
Take the hypotheses of Proposition \ref{prop:ex_seq}. If $E$, $E'$ are
Weierstrass elliptic curves and $\pi$ is a V\'{e}lu isogeny, then
$\pi'$ is also a V\'{e}lu isogeny.
\end{corollary}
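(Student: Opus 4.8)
The plan is to apply the preceding Lemma with $\varphi = \pi'$. Thus it suffices to verify two things: that the first three Weierstrass coefficients of the source $E'$ of $\pi'$ agree with those of its target $E$, and that $\pi'$ is normalized. The first is immediate from V\'elu's formulas: since $\pi \colon E \to E'$ is a V\'elu isogeny, $a_i' = a_i$ for $1 \leq i \leq 3$, which is precisely the coefficient condition the Lemma demands of $\pi' \colon E' \to E$.

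The substance of the argument is the normalization of $\pi'$, which I would extract from the relation $\pi' \circ \pi = 1 - \Fr_E$ established in the proof of Proposition \ref{prop:ex_seq}. Writing $(\pi')^*\omega = c\,\omega'$ for the unique scalar $c \in \overline{k}$, I would compute $(\pi' \circ \pi)^*\omega$ two ways. On one hand, $(\pi' \circ \pi)^*\omega = \pi^*\!\bigl((\pi')^*\omega\bigr) = \pi^*(c\,\omega') = c\,\pi^*\omega' = c\,\omega$, using that $\pi$ is normalized because it is a V\'elu isogeny. On the other hand, $(\pi' \circ \pi)^*\omega = (1 - \Fr_E)^*\omega = \omega - \Fr_E^*\omega = \omega$, where the middle step uses the additivity of $\psi \mapsto \psi^*\omega$ on $\mathrm{Hom}(E,E)$ and the last uses that $\Fr_E$ is purely inseparable, so $\Fr_E^*\omega = 0$. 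Comparing the two expressions gives $c = 1$, i.e. $(\pi')^*\omega = \omega'$. Along the way I should also record that $\pi'$ is separable: since $(1 - \Fr_E)^*\omega = \omega \neq 0$ the isogeny $1 - \Fr_E$ is separable, and a composite of isogenies is separable only if each factor is; this is needed so that the Lemma applies to $\pi'$.

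With both hypotheses of the Lemma in place, I conclude that $\pi'$ equals V\'elu's isogeny with domain $E'$ having the same kernel, so $\pi'$ is a V\'elu isogeny. I do not anticipate a serious obstacle; the only care needed is in the differential bookkeeping — the additivity of $\psi \mapsto \psi^*\omega$ and the vanishing $\Fr_E^*\omega = 0$ — which are standard facts about elliptic curves (see \cite{Sil}), and everything else is a direct application of Proposition \ref{prop:ex_seq}, its proof, and the Lemma.
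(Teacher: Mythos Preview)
Your argument is correct and follows essentially the same route as the paper: use $\Fr_E^*\omega = 0$ to see that $1 - \Fr_E = \pi'\circ\pi$ is normalized, combine with the normalization of the V\'elu isogeny $\pi$ to force $\pi'$ normalized, note $a_i = a_i'$ for $i\le 3$, and invoke the Lemma. Your explicit check that $\pi'$ is separable is a detail the paper leaves implicit but which the Lemma's hypotheses do require, so including it is a small improvement.
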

\begin{proof}
As the Frobenius map is inseparable, we have $\Fr_E^*\omega =
0$; it follows that $\pi' \circ \pi = 1 - \Fr_E$ is normalized. As $\pi$ is
normalized by assumption, $\pi'$ must be normalized also. But we already
have $a_i = a_i'$ for $1 \leq i \leq 3$, and so we are done by the
previous lemma.
\end{proof}

\section{A character sum determined by an isogeny}

We now consider a higher degree analogue of the character sums in
\cite{MR}, where the isogenies had degree $m=2$. Fix a prime $p$ and
set $q = p^r$ for some $r > 0$. Let $m \geq 2$ satisfy $(p,m) =
1$. (Note, however, that we do not require $p \equiv 1 \pmod{m}$ as in
\cite{MM} or even \cite[\S2]{MR}.) Suppose $E_1/\F_q$ is a Weierstrass
elliptic curve and $P \in E_1(\F_q)$ has exact order $m$. Let $V
\colon E_1 \to E_2$ denote the V\'{e}lu isogeny with kernel $\langle P
\rangle$. By Corollary \ref{cor:ex_seq},
\[ \xymatrix{ 0 \ar[r] & \langle P \rangle \ar[r] & E_1(\F_q)
  \ar[r]^{V^{\phantom{1}}_{\F_q}} & E_2(\F_q) \ar[r]^{V'_{\F_q}} & \langle P \rangle
  \ar[r] & 0} \]
is exact, where $V' \colon E_2 \to E_1$ is the V\'{e}lu isogeny with
kernel $V(E_1(\F_q))$. Thus, $V'_{\F_q}$ induces an isomorphism
$E_2(\F_q)/V(E_1(\F_q)) \stackrel{\cong}{\longrightarrow} \langle P
\rangle$. Fix $Q \in E_2(\F_q)$ such that $V'(Q) = P$. (Of course, if $m$ is
prime and we allow ourselves to replace $P$ with another generator of
$\langle P \rangle$, then we may choose any ${Q \in E_2(\F_q) -
  V(E_1(\F_q))}$.) 

Let $\zeta$ denote a primitive $m$-th root of unity in
$\overline{\F_q}$. The cokernel character for $V_{\F_q}$ may be
realized explicitly as $\chi_{\zeta,P} \colon E_2(\F_q) \to
\mu_m(\overline{\F_q})$, defined for any $R \in E_2(\F_q)$ by
\[ \chi_{\zeta,P}(R) := \zeta^j, \quad \mbox{$j$ chosen such that $R - jQ \in
  V(E_1(\F_q))$.}\]
The definition does not depend on the choice of $Q$; if $V'(Q_1) =
V'(Q) = P$, then $R - jQ_1$ and $R - jQ$ differ by an element of $\ker
V'_{\F_q} = \im V_{\F_q}$. The definition does depend on both
the choice of generator $P$ for the kernel of $V$ and also the choice
of generator $\zeta$ of $\mu_m$, but these choices are related as
follows. If $\langle P' \rangle = \langle P \rangle$, then there
exists $a$, $(a,m) = 1$, such that $P' = aP$. Further, there exists
$\zeta' \in \mu_m$ such that $\zeta'^a = \zeta$. The point $Q' = aQ$
satisfies $V'(Q') = P'$, and for any $R$ we have $R - jQ' = R -
jaQ$. Thus, if $\chi_{\zeta, P'}(R) = \zeta^j$, then $R - jaQ \in
V(E_1(\F_q))$, and $\chi_{\zeta', P}(R) = \zeta'^{aj} = \chi_{\zeta,
  P'}(R)$. 

So we fix, once and for all, a generator $\zeta$ for $\mu_m$, and
write $\chi_P$ for $\chi_{\zeta,P}$. We define an associated character
sum over the $\F_q$-points of $E_2$:
\[ S_P := \sum_{\substack{R \in E_2(\Fq) \\ R \neq \infty}} \chi_P(R)
x_R. \]
The main result of this paper is that $S_P$ is determined by a
simpler weighted sum over the $x$-coordinates of points in $\langle P
\rangle$.  
\begin{theorem}\label{thm:main}
With the above notation, $\displaystyle S_P = \sum_{j=1}^{m-1} \zeta^j x_{jP}$.
\end{theorem}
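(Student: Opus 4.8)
The plan is to break $S_P$ into pieces indexed by the fibers of $V'_{\F_q}$, apply V\'elu's formula to $V'$ (legitimate, since $V'$ is a V\'elu isogeny by Corollary~\ref{cor:ex_seq}), and then observe that the correction term produced by that formula is the same on every fiber, so it is annihilated by $\sum_{j=0}^{m-1}\zeta^j = 0$.

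First I would record the basic dictionary. Write $\mathcal{G} := V(E_1(\F_q))$; by the exact sequence of Corollary~\ref{cor:ex_seq} we have $\ker V'_{\F_q} = \mathcal{G}$ and $\im V'_{\F_q} = \langle P\rangle$. Hence, for $R \in E_2(\F_q)$, the defining condition $R - jQ \in V(E_1(\F_q))$ for $\chi_P(R) = \zeta^j$ is equivalent to $V'(R) = jP$, and $E_2(\F_q)$ is the disjoint union of the non-empty fibers $\Phi_j := \{R \in E_2(\F_q) : V'(R) = jP\}$ for $j = 0,1,\dots,m-1$. Since $\mathcal{G} \subseteq E_2(\F_q)$, each $\Phi_j$ with $j \neq 0$ is a full coset $R_j + \mathcal{G}$ of $\mathcal{G}$ in $E_2(\F_q)$ (for any fixed $R_j \in \Phi_j$), while $\Phi_0 = \mathcal{G}$. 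Therefore $S_P = \sum_{j=0}^{m-1}\zeta^j \sum_{R \in \Phi_j,\, R \neq \infty} x_R$.

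Next I would evaluate each inner sum. Put $\mathcal{G}^* = \mathcal{G} - \{\infty\}$ and $c := \sum_{S \in \mathcal{G}^*} x_S$. By Corollary~\ref{cor:ex_seq}, $V'$ is the V\'elu isogeny with kernel $\mathcal{G}$, so \eqref{eq:Velu} gives, for every $R \notin \mathcal{G}$, the identity $x_{V'(R)} = x_R + \sum_{S \in \mathcal{G}^*}(x_{R+S} - x_S) = \bigl(\sum_{S \in \mathcal{G}} x_{R+S}\bigr) - c$. For $j \neq 0$ one has $R_j \notin \mathcal{G}$ (indeed $V'(R_j) = jP \neq \infty$), and since $\mathcal{G}$ is a group every $R_j + S$ with $S \in \mathcal{G}$ again avoids $\mathcal{G}$; thus $\sum_{R \in \Phi_j} x_R = \sum_{S \in \mathcal{G}} x_{R_j + S} = x_{V'(R_j)} + c = x_{jP} + c$. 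For $j = 0$, removing $\infty$ from $\Phi_0 = \mathcal{G}$ leaves exactly $\mathcal{G}^*$, so $\sum_{R \in \Phi_0,\, R \neq \infty} x_R = c$. Assembling the pieces, $S_P = c + \sum_{j=1}^{m-1}\zeta^j(x_{jP} + c) = c\sum_{j=0}^{m-1}\zeta^j + \sum_{j=1}^{m-1}\zeta^j x_{jP}$, and the first term vanishes because $\zeta$ is a primitive $m$-th root of unity with $m \geq 2$.

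The only care needed is the bookkeeping around the point at infinity: it lies in the fiber $\Phi_0$, has no $x$-coordinate, and is precisely the term excluded from $S_P$, which is what turns $\sum_{S \in \mathcal{G}}$ into $\sum_{S \in \mathcal{G}^*}$ in the $j = 0$ contribution. I do not expect a substantive obstacle here; the crux is simply the observation that the ``defect constant'' $c$ coming out of V\'elu's formula is independent of $j$, and so disappears against the vanishing sum $\sum_{j=0}^{m-1}\zeta^j$.
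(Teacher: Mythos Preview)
Your argument is correct and is essentially the paper's own proof: decompose $E_2(\F_q)$ into the cosets of $\mathcal{G} = V(E_1(\F_q))$, apply V\'elu's formula for $V'$ (legitimate by Corollary~\ref{cor:ex_seq}) to each nontrivial coset, and let the constant $c = \sum_{S \in \mathcal{G}^*} x_S$ cancel against $\sum_{j=0}^{m-1} \zeta^j = 0$. The paper additionally pairs the cosets for $j$ and $m-j$ via $x_{-R} = x_R$, which forces a case split into $m$ even and $m$ odd; your version avoids that detour and treats both parities uniformly.
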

Thus, the task of computing the sum $S_P$, a priori involving roughly
$q \approx \#E_2(\F_q)$ terms, in fact only involves summing $m-1$
terms, and one does not even need to know $E_2$ explicitly in order to
do it!

\begin{proof}[Proof of Theorem \ref{thm:main}]
Let $\mathcal{T} = V(E_1(\F_p))$, and set $\mathcal{T}^* =
\mathcal{T} - \{\infty \}$. If $m = 2k$ is even, then
\[ S_P=\sum_{R \in \mathcal{T}^*} x_R + \sum_{j=1}^{k-1}
\left(\sum_{R \in {\T}}(\zeta^j+\zeta^{-j})x_{jQ+R}\right)+\sum_{R \in
  {\T}} \zeta^k x_{kQ+R}. \]
The kernel of $V'$ is $\mathcal{T}$. So from V\'{e}lu's formula
\eqref{eq:Velu} (and noting $\zeta^k = -1$), we have
\begin{equation*}
\begin{split}
S_P & = -x_{kQ} + \sum_{R \in \mathcal{T}^*} (x_R-x_{kQ+R}) +
\sum_{j=1}^{k-1} \left(\sum_{R \in \T}(\zeta^j + \zeta^{-j}) x_{jQ+R}
\right) \\ 
 & = -x_{kV'(Q)} + \sum_{j=1}^{k-1} \left(\sum_{R \in
     \T} (\zeta^j + \zeta^{-j}) x_{jQ+R} \right) \\
\end{split}
\end{equation*}
\begin{equation*}
\begin{split}
\phantom{S_P} & = -x_{kP} + \sum_{j=1}^{k-1} (\zeta^j+\zeta^{-j}) \sum_{R \in
   \T} x_{jQ+R} \\
& = -x_{kP} + \sum_{j=1}^{k-1} (\zeta^j + \zeta^{-j}) \left(
   \sum_{R \in \mathcal{T}} x_{jQ+R} - \sum_{R \in \mathcal{T}^*}
   x_R+\sum_{R \in \mathcal{T}^*} x_R \right) \\ 
& = -x_{kP} + \sum_{j=1}^{k-1} (\zeta^j + \zeta^{-j}) \left(
   x_{V'(jQ)} + \sum_{R \in \T^*} x_R \right) \\
& = -x_{kP}+\sum_{j=1}^{k-1} (\zeta^j+\zeta^{-j})
x_{jP}+\sum_{j=1}^{k-1} (\zeta^j+\zeta^{-j})\sum_{R \in \mathcal{T}^*} x_R \\
& = -x_{kP} + \sum_{j=1}^{k-1} (\zeta^j+\zeta^{-j}) x_{jP} -(\zeta^{k}
+ 1) \sum_{R \in \mathcal{T}^*}x_R \\ 
& = -x_{kP}+\sum_{j=1}^{k-1} (\zeta^j+\zeta^{-j}) x_{jP} =
\sum_{j=1}^{m-1}\zeta^j x_{jP}. 
\end{split}
\end{equation*}
If $m = 2k+1$ is odd, the argument is similar. We have
\begin{equation*}
\begin{aligned}
S_P & = \sum_{R \in \mathcal{T}^*} x_R + \sum_{j=1}^{k} \left(\sum_{R
    \in {\T}} (\zeta^j + \zeta^{-j}) x_{jQ+R} \right) \\
& = \sum_{R \in \mathcal{T}^*} x_R + \sum_{j=1}^k (\zeta^j +
\zeta^{-j}) \left( \sum_{R \in \mathcal{T}} x_{jQ+R} - \sum_{R \in
    \mathcal{T}^*} x_R + \sum_{R \in \mathcal{T}^*} x_R \right) \\
& = \sum_{R \in \mathcal{T}^*} x_R + \sum_{j=1}^k (\zeta^j+\zeta^{-j})
\left(x_{V'(jQ)} + \sum_{R \in \mathcal{T}^*} x_R \right) \\
& = \left(1 + \sum_{j=1}^k(\zeta^j+\zeta^{-j}) \right) \sum_{R \in
  \mathcal{T}^*} x_R + \sum_{j=1}^k (\zeta^j+\zeta^{-j})x_{jP} \\
& = \sum_{j=1}^k (\zeta^j+\zeta^{-j})x_{jP} = \sum_{j=1}^{m-1} \zeta^j x_{jP},
\end{aligned}
\end{equation*}
as claimed.
\end{proof}

\section{Applications}

\subsection{General formulas for small $m$}

As discussed in the previous section, the value of $S_P$ is dependent
on the choice of generator $\zeta$ for $\mu_m$. Hence, there may be up to
$\phi(m)$ possible values for the sum. In fact, as $x_{-P} =
x_P$ for every point $P$ on a Weierstrass elliptic curve, there are
only $\phi(m)/2$ distinct values for $S_P$ for $m > 2$.

Let $\lambda_i = \zeta^i + \zeta^{-i}$; then we may rewrite Theorem
\ref{thm:main} as
\[ S_P = \begin{cases} \phantom{-x_{kP} + }\sum_{j=1}^k \lambda_j x_{jP} & m = 2k+1 \\
  -x_{kP} + \sum_{j=1}^{k-1} \lambda_j x_{jP} & m = 2k \end{cases}. \]
For any particular value of $m$, the expression may simplify further
due to the symmetries of $\mu_m$. For example, when $m = 8$,
$\lambda_2 = 0$ and $\lambda_3 = -\lambda_1$. In Table
\ref{table:SP}, we catalog these expressions for $S_P$ for various
small $m$.
\begin{table}[t]
\caption{$S_P$ for various $m$}
\label{table:SP}
\begin{tabular}{crcc}
\hline \hline
$\phi(m)$ & $m$ & & $S_P$ \\
\hline
$1$ & $2$ & & $-x_P$ \\
\hline
$2$ & $3$ & & $-x_P$ \\
    & $4$ & & $-x_{2P}$ \\
    & $6$ & & $-x_{3P}$ \\
\hline
$4$ & $5$ & & $\lambda_1 x_P + \lambda_2 x_{2P}$ \\
    & $8$ & & $\lambda_1 (x_P - x_{3P}) - x_{4P}$ \\
    & $10$ & & $\lambda_1(x_P - x_{4P}) + \lambda_2(x_{2P} - x_{3P}) -
    x_{5P}$ \\
    & $12$ & & $\lambda_1(x_P - x_{5P}) + x_{2P} - x_{4P} - x_{6P}$ \\
\hline
$6$ & $7$ & & $\lambda_1 x_P + \lambda_2 x_{2P} + \lambda_3 x_{3P}$ \\
    & $9$ & & $\lambda_1 x_P + \lambda_2 x_{2P} - x_{3P} + \lambda_4
    x_{4P}$ \\
\hline \hline
\end{tabular}
\end{table}

\subsection{Parametrized families of isogenies}
For the values of $m$ given in Table \ref{table:SP}, there are
explicit parametrizations of Weierstrass elliptic curves with a
rational point of order $m$; thus, each such curve is the domain of a
V\'{e}lu isogeny of degree $m$ which satisfies the hypotheses of
Proposition \ref{prop:ex_seq}. Consequently, we may compute $P$, and
hence $S_P$, directly from the coefficients of the Weierstrass
equation. 

As a final application, we find these values of $S_P$ for parametrized
families for $m \in \{2, 3, 4, 5, 6, 8 \}$. The results are given in Table
\ref{table:families}. This corresponds to the computation of
$S_{\tau,p}$ in \cite{MR} for families of degree $2$ isogenies,
although our sums here are only valid in positive characteristic. Even
in the case $m=2$, however, the present result generalizes \cite{MR},
where the characteristic $p$ formulas were only established over the
prime field $\F_p$.

Most parametrizations are given in \cite[Table 3]{Kub}; in other
cases, a reference is listed in the table. As in \cite{Kub}, we let
$E(\beta, \gamma)$ denote the Weierstrass elliptic curve
\[ E(\beta, \gamma) \colon y^2 + (1 - \gamma)xy - \beta y = x^3 -
\beta x^2. \]
The parameters $\beta$ and $\gamma$ are constrained only by the
condition that the resulting equation must be nonsingular. For each
family, we identify a point $P$ of order $m$ and the value(s) of $S_P$
for the V\'{e}lu isogeny of kernel $\langle P \rangle$.
\begin{table}[t]
\caption{$S_P$ for parametrized families.}
\label{table:families}
\begin{tabular}{lcc}
\hline \hline
$m = 2$ & & $E \colon y^2 = x(x^2 + \alpha x + \beta)$; $P = (0,0)$ \\
 & & $S_P = 0$ \\
\hline \hline
$m = 3$ & & $E \colon y^2 + \alpha xy + \beta y = x^3$; $P = (0,0)$ \\
 & & $S_P = 0$ \\
\hline
$m = 4$ & & $E \colon y^2 + xy - \alpha y = x^3 - \alpha x^2$; $P = (0,0)$ \\
 & & $S_P = -\alpha$ \\
\hline
$m = 4$ & & $E \colon y^2 = x^3 + (1 - 2\alpha)x^2 + \alpha^2 x$; $P =
(\alpha, \alpha)$ \\
\cite[Lem.~3.1]{FMW} & & $S_P = 0$ \\
\hline
$m = 6$ & & $E = E(\alpha + \alpha^2, \alpha)$; $P = (0,0)$ \\
 & & $S_P = -\alpha(\alpha + 2)$ \\
\hline \hline
$m = 5$ & & $E = E(\alpha, \alpha)$; $P = (0,0)$ \\
 & & $S_P = \lambda_2 \alpha$ \\
\hline
$m =8$ & & $E = E(\beta, \gamma)$; $P = (0,0)$ \\
 & &  $\beta = (2\alpha - 1)(\alpha - 1)$,
$\gamma = \beta/\alpha$ \\
& & $S_P = -\alpha^{-1}(\alpha - 1)(\alpha^2 + 2\lambda_1 \alpha -
\lambda_1)$ \\
\hline \hline
\end{tabular}
\end{table}
Of course, this computation can be done for any parametrized family,
although the corresponding formulas are no longer particularly
informative as $m$ gets large. For example, when $m = 10$, Kubert
provides the parametrization (\cite[Table 3]{Kub}): 
\[E = E(\beta, \gamma), \quad \delta = \alpha(\alpha - 1) - 1, \gamma
= \alpha(\delta - 1), \beta = \gamma \delta, \] 
which possesses $P = (0,0)$ as point of order $10$. For this curve, we
find 
\begin{equation*}
\begin{split}
S_P & = \lambda_2 \alpha^5  - (3\lambda_2 + 2)\alpha^4 - (\lambda_2  - 4)\alpha^3\\
& \qquad + (6\lambda_2 + 2)\alpha^2 + (\lambda_2 - 4)\alpha - (2\lambda_2 + 2).
\end{split}
\end{equation*}

\section{Future Work}
As previously noted, when $\phi(m) = 2$ the value of $S_P$ is
well-defined and we have explicit expressions for it.  These cases
($m=3,4,6$) are naturally the easiest to research if relations can be
found between the quotient $S_p/p$ and the class number $h_p$. The
authors are currently investigating the cases $m=4$ and $m=6$. In
another direction, we ask whether any of the results generalize to
character sums attached to isogenies of higher dimensional abelian
varieties.

\subsection*{Acknowledgments}
We acknowledge the contribution of SAGE \cite{SAGE}, which facilitated
the construction of examples which were helpful in discovering the
main theorems of this work. The first author would also like to thank
Daniel Shumow for conversations which led to a better understanding of
normalized isogenies.  Finally, we greatly appreciate the comments of
an anonymous reviewer for many useful suggestions, including comments
that greatly improved the presentation in \S2. This research
was partially supported by the National Institute of Standards and
Technology.

\bibliography{MoodyRasmussen}{}
\bibliographystyle{abbrv}

\end{document}